\theoremstyle{plain}
\numberwithin{equation}{section}
\newtheorem{theorem}{Theorem}[section]
\newtheorem{definition-lemma}[theorem]{Definition-Lemma}
\newtheorem{proposition}[theorem]{Proposition}
\newtheorem{example}[theorem]{Example}
\newtheorem{remark}[theorem]{Remark}
\newcommand{\id}         {{\mathrm {Id}}}
\newcommand{\image}      {{\mathrm {Im}}}
\newcommand{\Ker}        {{\mathrm {ker}}}
\newcommand{\pr}         {{\mathrm{pr}}}
\def\id{{\rm id}}
\def\pr{{\rm pr}}
\def\toto{\rightrightarrows}
\def\<{\langle}
\def\>{\rangle}
\newcommand{\SP} [1]     {{\left\langle {{#1}} \right\rangle}}
\newcommand{\Grd}        {\mathcal{G}}
\newcommand{\grd}         {\mathcal{G}}
\newcommand{\sour}        {\mathsf{s}}
\newcommand{\tar}         {{\mathsf{t}}}
\newcommand{\inv}         {{\mathsf{inv}}}
\newcommand{\Cour}[1]      {[\![#1]\!]}
\newcommand{\TM}        {\mathbb{T}M}
\newcommand{\Lie}        {\mathcal L}
\begin{document}
\title[]
{Multisymplectic geometry and Lie groupoids}

\author[]{Henrique Bursztyn, Alejandro Cabrera and David Iglesias}

\address{IMPA,
Estrada Dona Castorina 110, Rio de Janeiro, 22460-320, Brasil }
\email{henrique@impa.br}

\address{Departamento de Matem\'atica Aplicada - IM, UFRJ. Av. Athos da Silveira Ramos 149 (CT bloco
C) Cidade Universitaria 21941-909 - Rio de Janeiro, RJ - Brasil -
Caixa-postal: 68530} \email{acabrera@labma.ufrj.br}

\address{Departamento de Matem\'atica Fundamental, Facultad de Matem\'aticas,
Universidad de la Laguna, Spain} \email{diglesia@ull.es}

\date{}

\maketitle

\begin{abstract}
We study higher-degree generalizations of symplectic groupoids,
referred to as {\em multisymplectic groupoids}. Recalling that
Poisson structures may be viewed as infinitesimal counterparts of
symplectic groupoids, we describe ``higher'' versions of Poisson
structures by identifying the infinitesimal counterparts of
multisymplectic groupoids. Some basic examples and features are
discussed.
\end{abstract}

\begin{center} {\it In memory of Jerry Marsden}
\end{center}

\tableofcontents

\section{Introduction}

Multisymplectic structures are higher-degree analogs of symplectic
forms which arise in the geometric formulation of classical field
theory much in the same way that symplectic structures emerge in the
hamiltonian description of classical mechanics, see
\cite{Got,Hel,KS} and references therein. This symplectic approach
to field theory was explored in a number of Marsden's publications,
which treated (as it was typical in Marsden's work) theoretical as
well as applied aspects of the subject, see e.g.
\cite{GIMM,GIM,Mar1,Mar2}. Multisymplectic geometry (as in
\cite{CID2,CID}) also arises in other settings, such as the study of
homotopical structures \cite{CR}, categorified symplectic geometry
\cite{BHR}, and geometries defined by closed forms \cite{MS}.

Poisson structures are generalizations of symplectic structures
which are central to geometric mechanics\footnote{E.g., in the
description of the interplay between hamiltonian dynamics and
symmetries \cite{MR}, and in the transition from classical to
quantum mechanics \cite{CW}.} and permeate Marsden's work. A natural
problem in multisymplectic geometry is the identification of
``higher'' analogs of Poisson structures bearing a relation to
multisymplectic forms that extends the way Poisson geometry
generalizes symplectic geometry. In this note we discuss one
possible approach to tackle this issue.

Our viewpoint relies on the relationship between Poisson geometry
and objects known as {\em symplectic groupoids} \cite{CDW,We87}.
This relationship is part of a generalized Lie theory in which
Poisson structures arise as infinitesimal, or linearized,
counterparts of symplectic groupoids, in a way analogous to how Lie
algebras correspond to Lie groups. In order to find higher analogs
of Poisson structures the route we take is to first consider
higher-degree versions of symplectic groupoids, referred to as {\em
multisymplectic groupoids}, and then to identify the geometric
objects arising as their infinitesimal counterparts. Recalling that
symplectic groupoids are Lie groupoids equipped with a symplectic
structure that is compatible with the groupoid multiplication, in
the sense that the symplectic form is {\em multiplicative} (see
\eqref{eq:multip} below), multisymplectic groupoids are defined
analogously, as Lie groupoids endowed with a multiplicative
multisymplectic structure. Our identification of the infinitesimal
objects corresponding to multisymplectic groupoids builds on the
infinitesimal description of general multiplicative differential
forms obtained in \cite{AC,bc}.

For a manifold $M$, our ``higher-degree'' analogs of Poisson
structures can be conveniently expressed (in the spirit of Dirac
geometry \cite{courant}) in terms of subbundles
\begin{equation}\label{eq:L}
L\subset TM\oplus \wedge^k T^*M
\end{equation}
satisfying suitable properties, including an involutivity condition
with respect to the ``higher'' Courant-Dorfman bracket on the space
of sections of $TM\oplus \wedge^k T^*M$ (see e.g.
\cite[Sec.~2]{Hi}). Related geometric objects have been recently
considered in the study of higher analogs of Dirac structures in
\cite{Zambon} (see also \cite{VYM}). But, as it turns out, the
higher Poisson structures that arise from multisymplectic groupoids
are not particular cases of the higher Dirac structures of
\cite{Zambon} (for example, comparing with \cite[Def.~3.1]{Zambon},
the higher Poisson structures \eqref{eq:L} considered here are not
necessarily lagrangian subbundles, though always isotropic). An
alternative characterization of these objects, more in the spirit of
the bivector-field description of Poisson structures, is presented
in Prop.~\ref{prop:D}.

Another perspective on higher Poisson structures relies on the view
of Poisson structures as Lie brackets on the space of smooth
functions of a manifold. A natural issue in this context is finding
an appropriate extension of the Poisson bracket defined by a
symplectic form (see \eqref{eq:poissons}) to multisymplectic
manifolds. This problem involves notorious difficulties and much
work has been done on it, see e.g. \cite{Forger1,Kan,CR}. The
approach to higher Poisson structures in this note follows a
different path and does not address any of the issues involved in
the algebraic study of higher Lie-type brackets.

The paper is structured as follows. We review Poisson structures and
their connection with symplectic groupoids in Section
\ref{sec:poisson}. In Section \ref{sec:multi} we recall the basics
of multisymplectic forms. The main results are presented in Section
\ref{sec:multigrp}, in which we introduce multisymplectic groupoids
and identify their infinitesimal counterparts. In Section
\ref{sec:higherc} we give different descriptions of these objects
and explain some of their properties, while examples are discussed
in Section \ref{sec:examples}.

As one should expect, higher Poisson structures naturally arise in
connection with symmetries in multisymplectic geometry. This aspect
of the subject is not treated here, though we hope to
explore it, as well as its relations with field theory, in future
work. Parallel ideas to those in this note can be also carried out
in the context of polysymplectic geometry, see \cite{Nicolas}.

\smallskip

\noindent {\bf Acknowledgments}: H.B. and A.C. thank the organizers
of the {\em Focus Program on Geometry, Mechanics and Dynamics: the
Legacy of Jerry Marsden}, held at the Fields Institute in July of
2012, for their hospitality during the program, as well as MITACS
for travel support (for which we also thank J. Koiller). H. B. was
partly funded by CNPq and Faperj. D. I. thanks MICINN (Spain) for a
``Ram\'on y Cajal" research contract; he is partially supported by
MICINN grants MTM2009-13383 and MTM2009-08166-E and Canary Islands
government project SOLSUB200801000238. We have benefited from many
stimulating conversations with M. Forger, J. C. Marrero, N.
Martinez, C. Rogers and M. Zambon. We also thank the referees for
several useful comments that improved the presentation of this note.

\section{Poisson structures and symplectic groupoids}
\label{sec:poisson}

We start by recalling a few different viewpoints to Poisson
structures.

A {\it Poisson structure} on a smooth manifold $M$ is Lie bracket
$\{\cdot,\cdot\}$ on $C^\infty(M)$ which is compatible with the
pointwise product of functions via the Leibniz rule:
\begin{equation}\label{eq:leibniz}
\{f,gh\}=\{f,g\}h + \{f,h\}g,\;\;\; f,g,h \in C^\infty(M).
\end{equation}
The Leibniz condition \eqref{eq:leibniz} implies that
$\{\cdot,\cdot\}$ is necessarily defined by a bivector field $\pi
\in \Gamma(\wedge^2 TM)$ via
$$
\pi(df,dg) = \{f,g\},\;\;\; f,g \in C^\infty(M).
$$
This leads to the alternative description of Poisson structures on
$M$ as bivector fields $\pi \in \Gamma(\wedge^2 TM)$ satisfying
$[\pi,\pi]=0$, where $[\cdot,\cdot]$ is the Schouten-Nijenhuis
bracket on multivector fields. (The vanishing of $[\pi,\pi]$
accounts for the Jacobi identity of $\{\cdot,\cdot\}$.) We denote
Poisson manifolds by either $(M,\pi)$ or $(M,\{\cdot,\cdot\})$.

Symplectic manifolds are naturally equipped with Poisson structures.
Given a symplectic manifold $(M,\omega)$, and denoting by $X_f$ the
hamiltonian vector field associated with $f\in C^\infty(M)$ via
\begin{equation}\label{eq:hamv}
i_{X_f}\omega = df,
\end{equation}
the Poisson bracket on $M$ is given by
\begin{equation}\label{eq:poissons}
\{f,g\} = \omega(X_g,X_f).
\end{equation}

A more recent perspective on Poisson structures, which is the
guiding principle of this note, relies on another type of connection
between Poisson structures and symplectic manifolds. It is based on
the fact that Poisson geometry fits into a generalized Lie theory,
naturally expressed in terms of Lie algebroids and groupoids, see
e.g. \cite{CDW}. In this context, Poisson manifolds are seen as
infinitesimal counterparts of global objects called {\it symplectic
groupoids} \cite{We87}, analogously to how Lie algebras are regarded
as infinitesimal versions of Lie groups. We will briefly recall the
main aspects of the theory.

Let $\grd \toto M$ be a Lie groupoid (the reader can find
definitions and further details in \cite{CW}). We use the following
notation for its structure maps: $\sour$, $\tar : \grd \to M$ for
the source, target maps, $m: \grd {_\sour\times_\tar} \grd \to \grd$
for the multiplication map\footnote{Here the fibred product $\grd
{_\sour\times_\tar}\grd = \{(g,h) \in \grd\times \grd\,|\,
\sour(g)=\tar(h)\}$ represents the space of composable arrows.},
$\epsilon: M\hookrightarrow \grd$ for the unit map, and
$\inv:\grd\to \grd$ for the groupoid inversion. We will often
identify $M$ with its image under $\epsilon$ (the submanifold of
$\grd$ of identity arrows).

A differential form $\omega \in \Omega^r(\grd)$ is called {\it
multiplicative} if it satisfies
\begin{equation}\label{eq:multip}
m^*\omega = \pr_1^*\omega + \pr_2^*\omega,
\end{equation}
where $\pr_i: \grd {_\sour\times_{\tar}}\grd \to \grd$, $i=1,2$, is
the natural projection onto the $i$-th factor\footnote{For a
function $f\in \Omega^0(\grd)=C^\infty(\grd)$, condition
\eqref{eq:multip} becomes $f(gh)=f(g)+f(h)$, i.e., it says that $f$
is a groupoid morphism into $\mathbb{R}$ (viewed as an abelian
group).}. A {\em symplectic groupoid} is a Lie groupoid $\grd \toto
M$ equipped with a multiplicative symplectic form $\omega \in
\Omega^2(\grd)$. In this case, condition \eqref{eq:multip} is
equivalent to the graph of the multiplication map $m$ being a
lagrangian submanifold of $\grd\times \grd \times \overline{\grd}$,
where $\overline{\grd}$ is equipped with the opposite symplectic
form $-\omega$. Symplectic groupoids first arose in symplectic
geometry in the context of quantization (see e.g.
\cite[Sec.~8.3]{BW}) but turn out to provide a convenient setting
for the study of symmetries and reduction \cite{MiWe}.

In order to explain how symplectic groupoids are related to Poisson
structures, recall that a {\it Lie algebroid} is a vector bundle
$A\to M$ equipped with a bundle map $\rho: A\to TM$, called the {\em
anchor}, and a Lie bracket $[\cdot,\cdot]$ on $\Gamma(A)$ such that
$$
[u,fv] = f[u,v] + (\Lie_{\rho(u)}f)v,
$$
for $u,v \in \Gamma(A),\; f\in C^\infty(M)$. Lie algebroids are
infinitesimal versions of Lie groupoids: for a Lie groupoid
$\grd\toto M$, its associated Lie algebroid is defined by
$A=\ker(d\sour)|_M$, with anchor map $d\tar|_A: A\to TM$ and Lie
bracket on $\Gamma(A)$ induced by the Lie bracket of right-invariant
vector fields on $\Grd$. Much of the usual theory relating Lie
algebras and Lie groups carries over to Lie algebroids and
groupoids, a notorious exception being Lie's third theorem, i.e.,
not every Lie algebroid arises as the Lie algebroid of a Lie
groupoid (see \cite{CF} for a thorough discussion of this issue).

The first indication of a connection between Poisson geometry and
Lie algebroids/groupoids is the fact that, if $(M,\pi)$ is a Poisson
manifold, then its cotangent bundle $T^*M\to M$ inherits a Lie
algebroid structure, with anchor map given by
\begin{equation}\label{eq:anchor}
\pi^\sharp: T^*M \to TM,\;\;\; \pi^\sharp(\alpha)=i_\alpha\pi,
\end{equation}
and Lie bracket on $\Gamma(T^*M)=\Omega^1(M)$ given by
\begin{equation}\label{eq:liep}
[\alpha,\beta]=\Lie_{\pi^\sharp(\alpha)}\beta -
\Lie_{\pi^\sharp(\beta)}\alpha - d(\pi(\alpha,\beta)).
\end{equation}

The precise relation between Poisson structures and symplectic
groupoids is as follows. First, given a symplectic groupoid
$(\grd\toto M,\omega)$, its space of units $M$ inherits a natural
Poisson structure $\pi$, uniquely determined by the fact that the
target map $\tar: \grd\to M$ is a Poisson map (while $\sour: \grd\to
M$ is anti-Poisson); moreover, denoting by $A$ the Lie algebroid of
$\grd$, there is a canonical identification between $A$ and the Lie
algebroid structure on $T^*M$ induced by $\pi$, explicitly given by
$$
\mu: A \stackrel{\sim}{\to} T^*M,\;\; \mu(u) = i_u\omega|_{TM}.
$$
Here we view $TM$ as a subbundle of $T\grd|_M$ via $\epsilon:
M\hookrightarrow \grd$, so that we can write
\begin{equation}\label{eq:decomp}
T\grd|_M = TM\oplus A.
\end{equation}
In other words, the Lie groupoid $\grd$  integrates the Lie
algebroid $T^*M$ defined by $\pi$.

Conversely, given a Poisson manifold $(M,\pi)$ and assuming that its
associated Lie algebroid is integrable (i.e., can be realized as the
Lie algebroid of a Lie groupoid\footnote{See e.g. \cite{We87} for a
nonintegrable example and \cite{CF2} for a discussion of
obstructions to integrability.}), then its $\sour$-simply-connected
integration $\grd\toto M$ inherits a symplectic groupoid structure.
(As shown in \cite{catfel}, one can obtain $\grd$ by means of an
infinite-dimensional Marsden-Weinstein reduction.)

The upshot of this discussion is that {\em Poisson manifolds are the
infinitesimal versions of symplectic groupoids.}

Some of the prototypical examples of symplectic groupoids are
traditional phase spaces in mechanics. For example, any cotangent
bundle $T^*Q$, equipped with its canonical symplectic form, is a
symplectic groupoid over $Q$ with respect to the groupoid structure
given by fibrewise addition of covectors; in this case, source and
target maps coincide, both being the bundle projection $T^*Q\to Q$,
and the corresponding Poisson structure on $Q$ is trivial: $\pi=0$.
A more interesting example is given by the cotangent bundle of a Lie
group $G$. In this case, besides the symplectic groupoid structure
over $G$ that we just described, $T^*G$ is also a symplectic
groupoid over $\mathfrak{g}^*$, where $\mathfrak{g}$ denotes the Lie
algebra of $G$. The groupoid structure
$$
T^*G\toto \mathfrak{g}^*
$$
is induced by the co-adjoint action of $G$ on $\mathfrak{g}^*$ (see
e.g. \cite{MiWe}); source and target maps are given by the momentum
maps for the cotangent lifts of the actions of $G$ on itself by left
and right translations, while the corresponding Poisson structure on
$\mathfrak{g}^*$ is just its natural Lie-Poisson structure. The fact
that the target map is a Poisson map may be viewed as the {\em
Lie-Poisson reduction theorem} (see e.g. \cite[Sec.~13.1]{MR}),
another one of Marsden's favorite topics. The correspondence between
Poisson structures and symplectic groupoids extends much of the
theory relating $\mathfrak{g}^*$ and $T^*G$ to more general
settings.

\section{Multisymplectic structures}\label{sec:multi}

A {\em multisymplectic structure} \cite{CID2,CID} on a manifold $M$
is a differential form $\omega\in \Omega^{k+1}(M)$ which is closed
and nondegenerate, in the sense that $i_X\omega=0$ implies that
$X=0$, for $X\in \Gamma(TM)$. Equivalently, the nondegeneracy
condition says that the bundle map
\begin{equation}\label{eq:mnondeg}
\omega^\sharp: TM \to \wedge^kT^*M,\;\;\; X\mapsto i_X\omega,
\end{equation}
is injective. As in \cite{BHR,CR}, we refer to a multisymplectic
form of degree $k+1$ as a {\em $k$-plectic} form. Hence a
$1$-plectic form $\omega$ is a usual symplectic structure, in which
case the map \eqref{eq:mnondeg} is necessarily surjective; note that
the wedge powers $\omega^r$, $r=2,\ldots,\dim(M)$, are natural
examples of higher degree multisymplectic forms. For completeness,
we briefly recall some other examples, see e.g. \cite{CID}.

For a manifold $Q$, the total space of the exterior bundle $\wedge^k
T^*Q$ carries a canonical $k$-plectic form $\omega_{can}$,
generalizing the canonical symplectic structure on $T^*Q$. Indeed,
there is a ``tautological'' $k$-form $\theta$ on $\wedge^k T^*Q$
given by
$$
\theta_{\xi}(X_1,\ldots,X_k)=\xi(dp(X_1),\ldots,dp(X_k)),
$$
where $p: \wedge^k T^*Q \to Q$ is the natural bundle projection,
$\xi \in \wedge^k T^*Q$, and $X_i$, $i=1,\ldots,k$, are tangent
vectors to $\wedge^k T^*Q$ at $\xi$. Then
\begin{equation}\label{eq:can}
\omega_{can} = d\theta
\end{equation}
is a $k$-plectic form on $\wedge^kT^*Q$. These $k$-plectic manifolds
are closely related to the multi-phase spaces in field theory (see
e.g. \cite{GIMM,Hel} and references therein).

Other examples of $k$-plectic manifolds include $(k+1)$-dimensional orientable
manifolds equipped with volume forms. An important class of
2-plectic manifolds is given by compact, semi-simple Lie groups $G$,
equipped with the Cartan 3-form $H\in \Omega^3(G)$, i.e., the bi-invariant
3-form uniquely defined by the condition $H(u,v,w)=\SP{u,[v,w]}$, where
$u$, $v$, $w\in \mathfrak{g}$ and $\SP{\cdot,\cdot}$ is the Killing form
(see e.g. \cite{BHR,CID}).
Hyper-K\"ahler manifolds are examples of 3-plectic manifolds: if
$\omega_1$, $\omega_2$, $\omega_3$ are the three K\"ahler forms on a hyper-K\"ahler
manifold $M$, then the form $\omega_1\wedge\omega_1 +
\omega_2\wedge\omega_2 + \omega_3\wedge\omega_3 \in \Omega^4(M)$ is
3-plectic \cite{CID,MS}.

In physical applications (such as quantization), an important issue
concerns the identification of an appropriate analog of the Poisson
bracket \eqref{eq:poissons} on a $k$-plectic manifold $(M,\omega)$;
there is an extensive literature on this problem, see
\cite{CID2,Forger1,Kan,CR}. As a starting point, one usually
considers forms $\alpha\in \Omega^{k-1}(M)$ for which there exists a
(necessarily unique) vector field $X_\alpha$ such that
$i_{X_\alpha}\omega = d\alpha$; such forms are called {\em
hamiltonian}. Then, on the space of hamiltonian $(k-1)$-forms, one
defines the bracket
\begin{equation}\label{eq:hpoisson}
\{\alpha,\beta\}=i_{X_\alpha}i_{X_\beta}\omega,
\end{equation}
which is a direct generalization of the Poisson bracket
\eqref{eq:poissons} when $k=1$. This skew-symmetric bracket turns
out to be well defined on the space of hamiltonian $(k-1)$-forms,
but the Jacobi identity usually fails (see e.g. \cite{CID2,CR}):
\begin{equation}\label{eq:jac}
\{\alpha,\{\beta,\gamma\}\} + \{\gamma,\{\alpha,\beta\}\} +
\{\beta,\{\gamma,\alpha\}\} = -d
i_{X_\alpha}i_{X_\beta}i_{X_\gamma}\omega.
\end{equation}

Much work has been done to deal with this ``defect'' on the
jacobiator of \eqref{eq:hpoisson}, either by forcing its elimination
or by somehow making sense of it. One approach relies on noticing
that closed $(k-1)$-forms are automatically hamiltonian, so one can
consider the quotient space of hamiltonian forms modulo closed forms
(see e.g. \cite{CID2}); the bracket \eqref{eq:hpoisson} descends to
this quotient and, since the right-hand side of \eqref{eq:jac} is
exact, the quotient inherits a genuine Lie-algebra
structure\footnote{In the case of exact $k$-plectic manifolds, a
different way to eliminate the jacobiator defect is presented in
\cite{Forger1}, based on a modification of the bracket
\eqref{eq:hpoisson} using the $k$-plectic potential.}. By using
multivector fields, one can also consider hamiltonian forms of other
degrees and show that these Lie algebras fit into larger graded Lie
algebras. A more recent approach, see \cite{BHR,CR}, shows that,
without taking quotients (so as to force the vanishing of the
jacobiator), the bracket \eqref{eq:hpoisson} on hamiltonian forms
can be naturally understood in terms of structures from homotopy
theory; namely, this bracket is part of a Lie $k$-algebra (a special
type of $L_\infty$-algebra). A missing ingredient in these
generalizations of the Poisson bracket \eqref{eq:poissons} is a
corresponding analog of the Leibniz rule \eqref{eq:leibniz}. For a
discussion in this direction, see e.g. \cite{Hra,Kan}.

Just as symplectic manifolds are particular cases of Poisson
manifolds, one could wonder about the analog of Poisson manifolds in
multisymplectic geometry. As recalled in Section \ref{sec:poisson},
the Leibniz rule is central for the general definition of a Poisson
structure. So, as indicated by the previous discussion on Poisson
brackets on $k$-plectic manifolds, it is not evident how to define
such analogs in terms of algebraic/Lie-type structures on spaces of
forms. A different, more geometric, perspective to this problem will
be discussed next.

\section{Multisymplectic groupoids and their infinitesimal versions}
\label{sec:multigrp}

We start with a straightforward generalization of symplectic
groupoids to multisymplectic geometry: A {\it multisymplectic
groupoid} is a Lie groupoid equipped with a multisymplectic form
that is multiplicative, in the sense of \eqref{eq:multip}. We will
also use the terminology {\em $k$-plectic groupoid} when the
multisymplectic form has degree $k+1$.

Recalling that Poisson structures arise as infinitesimal versions of
symplectic groupoids, as briefly explained in Section
\ref{sec:poisson}, we will now identify the infinitesimal objects
corresponding to multisymplectic groupoids.

Let $\grd \toto M$ be an $\sour$-simply-connected Lie groupoid, let
$A\to M$ be its Lie algebroid, with anchor map $\rho: A\to TM$. The
following result is established in \cite{AC,bc}: there is a 1-1
correspondence between closed, multiplicative forms $\omega \in
\Omega^{k+1}(\grd)$ and vector-bundle maps $\mu: A\to \wedge^k T^*M$
(covering the identity map on $M$) satisfying:
\begin{align}
&i_{\rho(u)}\mu(v) = -i_{\rho(v)}\mu(u),\label{eq:IM1}\\
&\mu([u,v]) = \Lie_{\rho(u)}\mu(v) - i_{\rho(v)}d (\mu(u)),
\label{eq:IM2}
\end{align}
for $u, v \in \Gamma(A)$. Such maps $\mu$ are called (closed) {\em
IM $(k+1)$-forms} (where IM stands for {\em infinitesimally
multiplicative}). Using \eqref{eq:decomp}, one can write the
explicit relation between $\omega$ and $\mu$ as
\begin{equation}\label{eq:rel}
i_{X_k} \ldots i_{X_1}\mu_x(u) = \omega_x(u,X_1,\ldots,X_k),
\end{equation}
for $u \in A|_x$ and $X_i\in TM|_x$, $x\in M$.

We now discuss a slight refinement of this result taking into
account the nondegeneracy condition of multisymplectic forms. We
will need a few properties of multiplicative forms on Lie groupoids,
all of which follow from  \eqref{eq:multip}. If $\omega$ is a
multiplicative form on $\grd$, then the following holds:
\begin{equation}\label{eq:propm}
\epsilon^*\omega=0,\qquad \inv^*\omega=-\omega,
\end{equation}
and
\begin{equation}\label{eq:mu}
i_{u^r}\omega = \tar^* \mu(u),\;\;\; \forall u\in \Gamma(A),
\end{equation}
where $u^r$ is the vector field on $\grd$ determined by $u \in
\Gamma(A)$ via right translations; see \cite[Sec.~3]{bcwz} for the
proofs of these identities (the proofs there work in any degree,
though the statements refer to 2-forms). Using the second equation
in \eqref{eq:propm} and \eqref{eq:mu}, we also obtain
\begin{equation}\label{eq:mu2}
i_{\overline{u}^l}\omega = -\sour^* \mu(u),
\end{equation}
where $\overline{u}^l = \inv_*(u^r)$ (note that this vector field
coincides with the one defined by left translations of
$\overline{u}=d\inv (u) \in \Gamma(\ker(d\tar)|_M)$).

\begin{proposition}\label{prop:nondeg} A closed, multiplicative form
$\omega^{k+1}(\grd)$ is nondegenerate if and only if its
corresponding IM form $\mu: A \to \wedge^k T^*M$ satisfies
\begin{itemize}
\item[(1)] $\ker \mu = \{0\}$,
\item[(2)] $(\image(\mu))^\circ = \{X\in TM\,|\, i_X\mu(u)=0\;\forall u\in A\} = \{0\}$.
\end{itemize}
\end{proposition}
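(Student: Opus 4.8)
The plan is to reduce nondegeneracy of $\omega$ at an arbitrary arrow of $\grd$, by means of multiplicativity, to a pointwise condition along the units $M$, and then to match that condition with (1) and (2). The key preliminary remark is that at a unit $x$ the form $\omega_x$ on $T_x\grd = T_xM\oplus A$ (cf. \eqref{eq:decomp}) is completely encoded by $\mu$: by the first identity in \eqref{eq:propm} it vanishes when all its arguments lie in $T_xM$, while evaluating \eqref{eq:mu} at $x$ (where $u^r_x=u$) yields $\omega_x(u,w_1,\dots,w_k)=\mu(u)(d\tar\, w_1,\dots,d\tar\, w_k)$ for every $u\in A$ and arbitrary $w_i\in T_x\grd$. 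Since $d\tar$ restricts to the identity on $T_xM$ and to the anchor $\rho$ on $A$, every contraction of $\omega_x$ having at least one argument in $A$ is expressed through $\mu$; in particular, no data beyond $\mu$ enter, which is what makes a clean comparison with (1)--(2) possible.

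First I would prove the unit-level statement: $\omega_x$ is nondegenerate if and only if (1) and (2) hold at $x$. For the ``if'' direction, take $v=X+u\in\Ker\omega_x$ with $X\in T_xM$ and $u\in A$; contracting $i_v\omega_x$ with $k$ vectors in $T_xM$ kills the $X$-contribution (by \eqref{eq:propm}) and leaves $\mu(u)$, so $\mu(u)=0$ and hence $u=0$ by (1); the remaining identity $i_X\omega_x=0$, tested against one vector in $A$ and the rest in $T_xM$, gives $i_X\mu(u')=0$ for all $u'$, whence $X=0$ by (2). For ``only if'': if $\mu(u)=0$ then $i_u\omega_x=(d\tar)^*\mu(u)=0$, forcing $u=0$ and giving (1); and if $i_X\mu(u')=0$ for all $u'$, then pulling any $A$-argument to the front in the formula above (so that $X$ remains inside $\mu$) gives $i_X\omega_x=0$, forcing $X=0$ and giving (2).

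It remains to upgrade nondegeneracy at units to nondegeneracy on all of $\grd$; the converse being immediate, this is the main step. Assuming (1)--(2), equivalently nondegeneracy at all units, I would take $v\in\Ker\omega_g$ at $g$ with $\sour(g)=x$, $\tar(g)=y$. Contracting $i_v\omega_g$ against the right-invariant fields $u^r$ and using \eqref{eq:mu} shows $\mu(u)_y(d\tar\, v,\dots)=0$ for all $u$, i.e. $d\tar(v)\in(\image\mu_y)^\circ=0$ by (2) at $y$; contracting against the left-invariant fields $\overline{u}^l$ and using \eqref{eq:mu2} gives $d\sour(v)\in(\image\mu_x)^\circ=0$ by (2) at $x$. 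Hence $v\in\Ker(d_g\sour)\cap\Ker(d_g\tar)$, so $v=u^r_g$ for a unique $u\in A_y$ (the right-invariant fields frame $\Ker d\sour$); then $i_v\omega_g=\tar^*\mu(u)|_g=0$ forces $\mu(u)_y=0$, and (1) at $y$ yields $u=0$, so $v=0$. Combining the two steps gives the equivalence. I expect this last reduction to be the crux: nondegeneracy is a priori a condition at every arrow, whereas (1)--(2) only constrain $\mu$ over $M$, and it is precisely the multiplicativity identities \eqref{eq:mu} and \eqref{eq:mu2} that let the invariant vector fields transport the unit-level information to an arbitrary $g$.
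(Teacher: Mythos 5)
Your proof is correct and follows essentially the same route as the paper's: the forward direction uses $i_{u^r}\omega=\tar^*\mu(u)$ together with the decomposition $T\grd|_M=TM\oplus A$ and the vanishing of $\epsilon^*\omega$, and the converse uses condition (2) to force a degenerate direction to be tangent to a fiber of the groupoid and then the invariant-vector-field identities plus (1) to kill it. The only (harmless) differences are organizational: you isolate a unit-level statement first, you work with $\ker(d\sour)$ and right-invariant fields where the paper uses $\ker(d\tar)$ and left-invariant fields via \eqref{eq:mu2}, and the intermediate fact $d\tar(v)=0$ is redundant once you have $d\sour(v)=0$.
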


\begin{proof}

Assume that $\omega$ is nondegenerate, and let us verify that $(1)$
and $(2)$ hold. If $u\in \ker \mu$, then (by \eqref{eq:mu})
$i_u\omega =\tar^*\mu(u) =0$, so $u=0$ and $(1)$ follows. Let now
$X\in (\image(\mu))^\circ |_x$, $x\in M$. Then
$i_ui_X\omega=-i_X\tar^*\mu(u)=\tar^* i_X\mu(u)=0$ for all $u\in
A|_x$. We claim that this implies that $i_X\omega=0$, so that $X=0$
by nondegeneracy, and hence $(2)$ holds. To see that, it suffices to
check that $i_{Z_k} \ldots i_{Z_1}i_X\omega =0$ for arbitrary
$Z_i\in T\grd|_x$, $i=1,\ldots,k$. Using \eqref{eq:decomp}, we write
$Z_i = X_i + u_i$, for $X_i\in TM|_x$ and $u_i\in A|_x$. Expanding
out $i_{Z_k} \ldots i_{Z_1}i_X\omega$ using multilinearity, we see
that the term $i_{X_k} \ldots i_{X_1}i_X\omega$ vanishes by the
first condition in \eqref{eq:propm}, and all the other terms vanish
as a consequence of the fact that $i_ui_X\omega=0 \; \forall u\in
A$.


Conversely, suppose that $(1)$ and $(2)$ hold, and let $X\in
T_g\Grd$ be such that $i_X\omega=0$. Then
$$
i_{u^r}i_X\omega=0=-i_X(\tar^*\mu(u))
$$
for all $u \in \Gamma(A)$, which means that $d\tar(X)\in
(\image(\mu))^\circ$, so $d\tar(X)=0$ by $(2)$. Hence $X$ is tangent
to the $\tar$-fiber at $g$, and we can find $v\in \Gamma(A)$ so that
$\inv_*(v^r) |_g=\overline{v}^l |_g =X$. By \eqref{eq:mu2}, at the
point $g$ we have
$$
i_X\omega= i_{\overline{v}^l}\omega = -\sour^* \mu(v),
$$
so $i_X\omega=0$ implies that $\mu(v)=0$, hence $v=0$ by $(1)$, and
$X= \overline{v}^l |_g =0$.
\end{proof}

It follows that the infinitesimal counterpart of a $k$-plectic
groupoid is a closed IM $(k+1)$-form $\mu: A\to \wedge^k T^*M$
additionally satisfying conditions (1) and (2) of
Prop.~\ref{prop:nondeg}. A natural terminology for the resulting
object is {\em IM k-plectic form}. In this paper, we will
alternatively refer to them as  {\em higher Poisson structures of
degree $k$}, or simply {\em $k$-Poisson structures} (being aware
that this may clash with the terminology for different objects in
the literature). Before giving different characterizations of
$k$-Poisson structures and examples, we briefly explain how
1-Poisson structures are the same as ordinary Poisson structures.

\subsection{The case $k=1$}\label{subsec:k1}

For a bundle map $\mu: A\to T^*M$, note that condition $(1)$ in
Prop.~\ref{prop:nondeg} says that $\mu$ is injective, while $(2)$
says that $\mu$ is surjective. It follows that a 1-Poisson structure
is a bundle map $\mu: A\to T^*M$ satisfying \eqref{eq:IM1},
\eqref{eq:IM2} (i.e., a closed IM 2-form), and that is an
isomorphism.

Note that given a Poisson structure $\pi$ on $M$,  if we consider
the associated Lie algebroid $A=T^*M$, see \eqref{eq:anchor} and
\eqref{eq:liep}, it is clear that
\begin{equation}\label{eq:id}
\mu= \id: A \to T^*M
\end{equation}
is a 1-Poisson structure. It turns out that any 1-Poisson structure
is equivalent\footnote{We say that two IM $(k+1)$-forms $\mu_1:A_1
\to \wedge^kT^*M$ and $\mu_2:A_2 \to \wedge^kT^*M$ are {\em
equivalent} if there is a Lie-algebroid isomorphism $\phi: A_1\to
A_2$ such that $\mu_2\circ \phi = \mu_1$; these are infinitesimal
versions of isomorphism of Lie groupoids preserving multiplicative
forms.} to one of this type. To justify this claim, it will be
convenient to view Poisson structures from the broader perspective
of Dirac geometry \cite{courant}.

Let us consider the bundle $\TM := TM \oplus T^*M \to M$ equipped
with the non-degenerate, symmetric fibrewise bilinear pairing
$\SP{\cdot,\cdot}$ given at each $x\in M$ by
\begin{equation}\label{eq:pairing}
\SP{(X,\alpha),(Y,\beta)}:= \beta(X) + \alpha(Y),
\end{equation}
for $X,Y\in T_xM,\; \alpha,\beta \in T_x^*M$, and with the
Courant-Dorfman bracket $\Cour{\cdot,\cdot}: \Gamma(\TM)\times
\Gamma(\TM)\to \Gamma(\TM)$,
\begin{equation}\label{eq:courant}
\Cour{(X,\alpha),(Y,\beta)}:=([X,Y],\Lie_X\beta-i_Yd\alpha).
\end{equation}
Poisson structures on $M$ are equivalent to subbundles $L\subset
\TM$ satisfying
\begin{itemize}
\item[(d1)] $L=L^\perp$, i.e., $L$ is {\em lagrangian} with respect to $\SP{\cdot,\cdot}$,
\item[(d2)] $L\cap TM=\{0\}$,
\item[(d3)] $\Cour{\Gamma(L),\Gamma(L)}\subseteq
\Gamma(L)$.
\end{itemize}
Condition (d1) is equivalent to $L$ being isotropic, i.e., $L
\subseteq L^\perp$, and the dimension condition
$\mathrm{rank}(L)=\dim(M)$. Using the exact sequence
$$
L\cap TM \to L \to T^*M
$$
induced by the natural projection $\pr_2:\TM\to T^*M$, we see that
(d2) is equivalent to saying that $L$ projects isomorphically onto
$T^*M$. It follows that conditions (d1) and (d2) can be
alternatively written as
\begin{itemize}
\item[(d1')] $L \subseteq L^\perp$,
\item[(d2')] $\pr_{2}|_L: L \to T^*M$ is an
isomorphism.
\end{itemize}
Given a subbundle $L\subset \TM$, conditions (d1') and (d2') are
equivalent to $L$ being the graph of a skew-adjoint bundle map
$T^*M\to TM$; such maps are always of the form $\alpha\mapsto
i_\alpha\pi$, where $\pi$ is a bivector field. The involutivity
condition (d3) amounts to $[\pi,\pi]=0$.

Let $\mu: A\to T^*M$ be a 1-Poisson structure, and let us consider
the bundle map
\begin{equation}\label{eq:rmmap}
(\rho,\mu): A \to \TM,
\end{equation}
where $\rho: A\to TM$ is the anchor. Since $\mu$ is an isomorphism,
the map \eqref{eq:rmmap} is injective, and its image is a subbundle
$L\subset \TM$ satisfying (d2'). Note that condition \eqref{eq:IM1}
for $\mu$ amounts to condition (d1') for $L$, while \eqref{eq:IM2}
becomes (d3). It follows that $L$ represents a Poisson structure on
$M$, explicitly given by
$$
\pi(\alpha,\beta) = i_{\rho(\mu^{-1}(\alpha))}\beta,\qquad
\alpha,\beta\in T^*M.
$$
It is clear from \eqref{eq:IM2} that $\mu: A\to T^*M$ is an
isomorphism of Lie algebroids, where $T^*M$ has the Lie-algebroid
structure induced by $\pi$ (as in \eqref{eq:anchor} and
\eqref{eq:liep}), showing the equivalence between $\mu$ and the
1-Poisson structure \eqref{eq:id} associated with $\pi$.

As we see next, one has a similar interpretation of general
$k$-Poisson structures in terms of higher Courant-Dorfman brackets
(as in \cite[Sec.~2]{Hi}), leading to objects closely related to
those studied in \cite{Zambon}.

\section{Descriptions of $k$-Poisson structures}\label{sec:higherc}

Let us consider the vector bundle
$$
\TM^{(k)}:= TM\oplus \wedge^kT^*M;
$$
we denote by $\pr_1: \TM^{(k)} \to TM$ and $\pr_2: \TM^{(k)} \to
\wedge^kT^*M$ the natural projections. The same expressions as in
\eqref{eq:pairing} and \eqref{eq:courant} lead to a symmetric
$\wedge^{k-1}T^*M$-valued pairing $\SP{\cdot,\cdot}$ on the fibres
of $\TM^{(k)}$ and a bracket $\Cour{\cdot,\cdot}$ on
$\Gamma(\TM^{(k)})$, that we will keep referring to as the
Courant-Dorfman bracket.

Given a subbundle $L\subset \TM^{(k)}$, we keep denoting by
$L^\perp$ its orthogonal relative to $\SP{\cdot,\cdot}$; note that,
for $k>1$, it may happen that $L^\perp$ does not have constant rank
(see Section~\ref{sec:examples}). We will keep calling $L$ {\em
isotropic} if $L\subset L^\perp$, and {\em involutive} if its space
of sections $\Gamma(L)$ is closed under $\Cour{\cdot,\cdot}$. For a
subbundle $D\subseteq \wedge^k T^*M$, we let
$$
D^\circ:=\{X\in TM\,|\, i_X\alpha =0\, \forall \alpha\in D\}
$$
be its annihilator.

Whenever $L\subset \TM^{(k)}$ is an isotropic and involutive
subbundle, it inherits a Lie-algebroid structure with anchor map
$\pr_1|_L : L\to TM$ and Lie bracket
$\Cour{\cdot,\cdot}|_{\Gamma(L)}$ on $\Gamma(L)$. In particular, it
follows that the distribution
\begin{equation}\label{eq:dist}
\pr_1(L)\subseteq TM
\end{equation}
is integrable and its integral leaves (the ``orbits'' of the Lie
algebroid) define a singular foliation on $M$, see
\cite[Sec.~8.1]{DZ}. One may also directly check that
\begin{equation}\label{eq:IML}
\pr_2|_L : L \to \wedge^{k}T^*M
\end{equation}
is a closed IM $k$-form. Since $\ker(\pr_2|_L)= L\cap TM$ and
$$
(\pr_2(L))^\circ = L^\perp \cap TM \supseteq L\cap TM,
$$
it is clear that \eqref{eq:IML} is a $k$-Poisson structure if and
only if
\begin{equation}\label{eq:nondegL}
L^\perp\cap TM=\{0\}.
\end{equation}
By considering the bundle map \eqref{eq:IML}, we will think of any
isotropic, involutive subbundle $L\subseteq \TM^{(k)}$ satisfying
\eqref{eq:nondegL} as a $k$-Poisson structure. It turns out that all
$k$-Poisson structures on $M$ are of this type.

\begin{proposition}\label{prop:L}
Any $k$-Poisson structure $\mu: A\to \wedge^k T^*M$ is equivalent to
a subbundle $L\subset \TM^{(k)}$ that is isotropic, involutive, and
satisfies \eqref{eq:nondegL}.
\end{proposition}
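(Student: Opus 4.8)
The plan is to mimic the construction carried out for $k=1$ in Section~\ref{subsec:k1}, replacing the isomorphism $(\rho,\mu)\colon A\to \TM$ by the bundle map
$$
(\rho,\mu)\colon A\to \TM^{(k)},\qquad u\mapsto (\rho(u),\mu(u)),
$$
and taking $L$ to be its image. First I would observe that condition $(1)$ of Prop.~\ref{prop:nondeg} (namely $\ker\mu=\{0\}$) forces $(\rho,\mu)$ to be injective on every fibre, since $(\rho,\mu)(u)=0$ entails $\mu(u)=0$ and hence $u=0$. As $A\to M$ has constant rank, an injective bundle map over the identity has image of constant rank, so $L:=\image(\rho,\mu)$ is a genuine subbundle of $\TM^{(k)}$ and $(\rho,\mu)\colon A\to L$ is a vector-bundle isomorphism. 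This last point is what lets me identify $\Gamma(L)$ with $\{(\rho(u),\mu(u))\mid u\in\Gamma(A)\}$, which is essential for the involutivity check.

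Next I would verify the three required properties directly from the defining identities of a $k$-Poisson structure. For isotropy, using the $\wedge^{k-1}T^*M$-valued pairing built from \eqref{eq:pairing} one computes
$$
\SP{(\rho(u),\mu(u)),(\rho(v),\mu(v))}=i_{\rho(u)}\mu(v)+i_{\rho(v)}\mu(u),
$$
which vanishes precisely by \eqref{eq:IM1}; hence $L\subseteq L^\perp$. For involutivity, the Courant-Dorfman bracket \eqref{eq:courant} of two such sections is
$$
\Cour{(\rho(u),\mu(u)),(\rho(v),\mu(v))}=\big([\rho(u),\rho(v)],\ \Lie_{\rho(u)}\mu(v)-i_{\rho(v)}d\mu(u)\big).
$$
Since $\rho$ is a Lie-algebra homomorphism on sections, $[\rho(u),\rho(v)]=\rho([u,v])$, while \eqref{eq:IM2} rewrites the second slot as $\mu([u,v])$; thus the bracket equals $(\rho,\mu)([u,v])$, a section of $L$. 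This simultaneously shows that $\Gamma(L)$ is closed under $\Cour{\cdot,\cdot}$ and that $(\rho,\mu)\colon A\to L$ intertwines anchors and brackets, i.e.\ it is a Lie-algebroid isomorphism.

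Finally I would address \eqref{eq:nondegL} and the equivalence claim. Here I would invoke the pointwise identity $L^\perp\cap TM=(\pr_2(L))^\circ$: indeed $X\in TM$ lies in $L^\perp$ iff $i_X\beta=0$ for every $(Y,\beta)\in L$, i.e.\ iff $X$ annihilates $\pr_2(L)$. Since $\pr_2(L)=\image(\mu)$, condition $(2)$ of Prop.~\ref{prop:nondeg} gives exactly $L^\perp\cap TM=(\image(\mu))^\circ=\{0\}$. Combined with $\pr_2|_L\circ(\rho,\mu)=\mu$ and the Lie-algebroid isomorphism above, this exhibits $\mu$ as equivalent to the $k$-Poisson structure $\pr_2|_L$ determined by $L$.

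I do not expect a serious obstacle: the argument is a faithful generalization of the $k=1$ case. The one point demanding care, and where the higher-degree setting genuinely differs, is that for $k>1$ the orthogonal $L^\perp$ need not have constant rank, so one cannot argue through a ``lagrangian'' dimension count as when $k=1$. Instead one must keep the conditions in the refined form (isotropic together with the \emph{pointwise} nondegeneracy \eqref{eq:nondegL}) and rely on the fibrewise identity $L^\perp\cap TM=(\pr_2(L))^\circ$ rather than on any rank complementarity.
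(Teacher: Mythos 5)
Your proposal is correct and follows essentially the same route as the paper's proof: take $L$ to be the image of $(\rho,\mu)\colon A\to \TM^{(k)}$, use condition $(1)$ of Prop.~\ref{prop:nondeg} for injectivity, \eqref{eq:IM1} for isotropy, \eqref{eq:IM2} for involutivity and the Lie-algebroid isomorphism, and condition $(2)$ for \eqref{eq:nondegL}. The paper states these implications without computation, so your write-up simply supplies the details (including the correct caution that no lagrangian dimension count is available for $k>1$).
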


\begin{proof}
Let $ \mu: A \to \wedge^k T^*M$ be a $k$-Poisson structure. The
bundle map $(\rho,\mu): A \to \TM$ is an isomorphism onto its image
(due to condition $(1)$ in Prop.~\ref{prop:nondeg}), which is a subbundle
$L\subset \TM^{(k)}$ that is isotropic, involutive, and satisfies
\eqref{eq:nondegL} (as a result of \eqref{eq:IM1}, \eqref{eq:IM2}
and condition $(2)$ in Prop.~\ref{prop:nondeg}, respectively). It is
clear that $(\rho,\mu): A \to L$ is an isomorphism of Lie
algebroids, which establishes the desired equivalence.
\end{proof}

We conclude that the infinitesimal versions of $k$-plectic groupoids
can be seen as isotropic, involutive subbundles $L\subset \TM^{(k)}$
satisfying \eqref{eq:nondegL}. Note that the condition $L=L^\perp$
(see (d1)) may not hold for $k> 1$ (we will see simple examples in
Section \ref{sec:examples}); in the case $k=1$, the condition
$L^\perp \cap TM = (\pr_2(L))^\circ =\{0\}$ implies that
$\pr_2(L)=T^*M$, so that $L=L^\perp$.

There is yet another characterization of $k$-Poisson structures,
closer in spirit to the description of Poisson structures via
bivector fields.

\begin{proposition}\label{prop:D}
There is a one-to-one correspondence between subbundles $L\subset
\TM^{(k)}$ as in Prop.~\ref{prop:L} and pairs $(D,\lambda)$, where
$D\subseteq \wedge^k T^*M$ is a subbundle and $\lambda: D\to TM$ is
a bundle map (covering the identity) satisfying the following
conditions: {(a)} $D^\circ =\{0\}$, {(b)} $i_{\lambda(\alpha)}\beta
= -i_{\lambda(\beta)}\alpha$, for $\alpha,\beta \in D$, and {(c)}
the space $\Gamma(D)$ is involutive with respect to the bracket
(c.f. \eqref{eq:liep})
\begin{equation}\label{eq:lbrk}
[\alpha,\beta]_\lambda := \Lie_{\lambda(\alpha)}\beta -
i_{\lambda(\beta)}d\alpha = \Lie_{\lambda(\alpha)}\beta -
\Lie_{\lambda(\beta)}\alpha - d(i_{\lambda(\alpha)}\beta),
\end{equation}
and $\lambda: \Gamma(D)\to \Gamma(TM)$ preserves brackets.
\end{proposition}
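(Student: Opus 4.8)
The plan is to set up the correspondence explicitly and then check that the three defining conditions on each side match up term by term. Given a subbundle $L\subset \TM^{(k)}$ as in Prop.~\ref{prop:L}, I would first observe that isotropy together with \eqref{eq:nondegL} forces $L\cap TM=\{0\}$: indeed $L\subseteq L^\perp$ gives $L\cap TM\subseteq L^\perp\cap TM=\{0\}$. Hence $\pr_2|_L: L\to \wedge^k T^*M$ is fibrewise injective, so its image $D:=\pr_2(L)$ is a subbundle of $\wedge^k T^*M$ (of rank equal to $\mathrm{rank}(L)$) and $\pr_2|_L: L\to D$ is a bundle isomorphism. I then define $\lambda:=\pr_1\circ (\pr_2|_L)^{-1}: D\to TM$, a bundle map covering the identity. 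The upshot is that $L$ is exactly the graph of $\lambda$, i.e. $L=\{(\lambda(\alpha),\alpha)\,|\,\alpha\in D\}$; this graph picture is the crux that makes the whole correspondence transparent.

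Conversely, given a pair $(D,\lambda)$ satisfying (a)--(c), I would simply set $L:=\{(\lambda(\alpha),\alpha)\,|\,\alpha\in D\}$, which is a subbundle of $\TM^{(k)}$ since $D$ is a subbundle and $\lambda$ a bundle map. It is immediate that the two assignments $L\mapsto (D,\lambda)$ and $(D,\lambda)\mapsto L$ are mutually inverse (the graph of $\lambda$ recovers $D=\pr_2(L)$, and the $\lambda$ read off from a graph is the original one), so the only real content is the equivalence of the respective sets of conditions.

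For this, I would match the conditions one by one. The annihilator identity $L^\perp\cap TM=(\pr_2(L))^\circ=D^\circ$ (already noted before Prop.~\ref{prop:L}) shows that \eqref{eq:nondegL} is equivalent to (a). Writing out the pairing from \eqref{eq:pairing} on the higher bundle, $\SP{(\lambda(\alpha),\alpha),(\lambda(\beta),\beta)}=i_{\lambda(\alpha)}\beta+i_{\lambda(\beta)}\alpha$, so isotropy $L\subseteq L^\perp$ is exactly (b). Finally, for two sections $(\lambda(\alpha),\alpha),(\lambda(\beta),\beta)$ of the graph, the Courant--Dorfman bracket \eqref{eq:courant} gives $\Cour{(\lambda(\alpha),\alpha),(\lambda(\beta),\beta)}=([\lambda(\alpha),\lambda(\beta)],\,[\alpha,\beta]_\lambda)$, whose second component is precisely the bracket \eqref{eq:lbrk}. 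Thus $\Gamma(L)$ is involutive if and only if $[\alpha,\beta]_\lambda\in\Gamma(D)$ for all $\alpha,\beta\in\Gamma(D)$ and $\lambda([\alpha,\beta]_\lambda)=[\lambda(\alpha),\lambda(\beta)]$, which is exactly (c).

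The computations are all routine, so there is no serious obstacle; the points that require a little care are (i) checking that $D=\pr_2(L)$ has constant rank, which follows from the fibrewise injectivity of $\pr_2|_L$ established at the start, and (ii) the consistency of the two expressions for $[\alpha,\beta]_\lambda$ in \eqref{eq:lbrk}, which relies on Cartan's formula $\Lie_X=i_Xd+di_X$ together with condition (b) to cancel the term $d(i_{\lambda(\alpha)}\beta)+d(i_{\lambda(\beta)}\alpha)$. Once the graph description of $L$ is in hand, both directions reduce to reading off the three conditions from this description.
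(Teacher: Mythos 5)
Your proof is correct and follows essentially the same route as the paper: realize $L$ as the graph of $\lambda=\pr_1\circ(\pr_2|_L)^{-1}$ on $D=\pr_2(L)$ (using $L\cap TM\subseteq L^\perp\cap TM=\{0\}$ for injectivity of $\pr_2|_L$), and then match \eqref{eq:nondegL}, isotropy, and involutivity with conditions (a), (b), (c) respectively. The extra details you supply (the explicit Courant--Dorfman computation on graph sections and the Cartan-formula check that the two expressions in \eqref{eq:lbrk} agree under (b)) are accurate and merely flesh out what the paper leaves implicit.
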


\begin{proof}
Given a $k$-Poisson structure $L\subset TM\oplus \wedge^k T^*M$,
note that $\pr_2|_L: L\to \wedge^k T^*M$ is injective (since
$\Ker(\pr_2|_L)=L\cap TM\subseteq L^\perp\cap TM=\{0\}$). Setting
$D=\pr_2(L)$ and $\lambda=\pr_1\circ (\pr_2|_L)^{-1}$, we see that
$L=\{(\lambda(\alpha),\alpha)\,|\, \alpha \in D\}$. Then
\eqref{eq:nondegL} is equivalent to condition $(a)$, while $(b)$
means that $L$ is isotropic. The involutivity of $L$ is equivalent
to condition $(c)$.
\end{proof}

For $k=1$, as previously remarked, $D=T^*M$ (as a result of $(a)$),
while $(b)$ says that $\lambda = \pi^\sharp$, for a bivector field
$\pi$. The involutivity condition in $(c)$ is automatically
satisfied, and the bracket-preserving property is equivalent to the
Poisson condition $[\pi,\pi]=0$ (see e.g. \cite[Lem.~2.3]{BC}).

For a $k$-Poisson structure defined by $(D,\lambda)$ as in
Prop.~\ref{prop:D}, $D$ acquires a Lie algebroid structure with
bracket \eqref{eq:lbrk} and anchor $\lambda$, in such a way that
$\pr_2|_L:L\to D$ is an isomorphism of Lie algebroids. In terms of
$(D,\lambda)$, the singular foliation on $M$ determined by the
$k$-Poisson structure (see \eqref{eq:dist}) is given by the integral
leaves of the distribution $\lambda(D)\subseteq TM$. Moreover, each
leaf $\mathcal{O}$ inherits a $(k+1)$-form $\omega$ by
\begin{equation}\label{eq:leafform}
\omega(Y_0,Y_1,\ldots,Y_k)= i_{Y_k}\ldots i_{Y_1}\alpha,
\end{equation}
where $Y_i\in \lambda(D)|_{\mathcal{O}}=T\mathcal{O}$, and $\alpha
\in D$ is such that $Y_0=\lambda(\alpha)$; indeed, property $(b)$ in
Prop.~\ref{prop:D} assures that $\omega$ is well defined. One may
also verify, using $(c)$ in Prop.~\ref{prop:D}, that $\omega$ is
closed. For $k=1$, one recovers the symplectic foliation that
underlies any Poisson structure and completely determines it.
However, for $k>1$, it is no longer true that the leafwise closed
$(k+1)$-forms are nondegenerate, nor that a $k$-Poisson structure is
uniquely determined by them, see Remark \ref{rem:fol2} (c.f.
\cite[Prop.~3.8]{Zambon}).

The description of $k$-Poisson structures in Prop.~\ref{prop:D} also
makes the notion of morphism of $k$-Poisson manifolds more evident:
if $(D_i,\lambda_i)$ is a $k$-Poisson structure on $M_i$, $i=1,2$,
then a map $\phi: M_1\to M_2$ is a {\em $k$-Poisson morphism} if,
for all $x\in M_1$, $\phi^*(D_2|_{\phi(x)})\subseteq D_1|_x$ and
$d\phi(\lambda_1(\phi^*\alpha))=\lambda_2(\alpha)$, for all
$\alpha\in D_2|_{\phi(x)}$.

\section{Some examples and final remarks}\label{sec:examples}

We now give some examples of $k$-Poisson structures. The first two
examples are from \cite{Zambon}.

\begin{example}\label{ex:multisymp}
Let $\omega\in \Omega^{k+1}(M)$ be a $k$-plectic form. Then its
graph
$$
L=\{(X,i_X\omega),\; X\in TM\} \subset \TM^{(k)}
$$
satisfies $L=L^\perp$ and is involutive (as a consequence of
$\omega$ being closed, see \cite[Prop.~3.2]{Zambon}). Also,
$L^\perp\cap TM = L\cap TM = \ker(\omega)=\{0\}$ by nondegeneracy.
In terms of Prop.~\ref{prop:D}, $D=\mathrm{Im}(\omega^\sharp)$ and
$\lambda= (\omega^\sharp)^{-1}:D\to TM$. So, just as any symplectic
structure is a Poisson structure, any $k$-plectic form is a
particular type of $k$-Poisson structure. A $k$-plectic groupoid
integrating this $k$-Poisson structure is the pair groupoid $M\times
M$, with $k$-plectic structure $p_1^*\omega - p_2^*\omega$ where
$p_i$, $i=1,2$, denote the two natural projections from $M\times M$
to $M$.
\end{example}

Considering a $k$-plectic groupoid $\grd\toto M$ with the
$k$-Poisson structure of Example~\ref{ex:multisymp}, one may use
\eqref{eq:mu} to check that the target map $\tar: \grd\to M$ is a
$k$-Poisson morphism, extending the well-known property of
symplectic groupoids, see Section~\ref{sec:poisson}.

We saw in Section~\ref{subsec:k1} that Poisson bivector fields are
the same as 1-Poisson structures. Other types of higher Poisson
structures are obtained from top-degree multivector fields as
follows.

\begin{example}\label{ex:multivector}
Let $\pi \in \Gamma(\wedge^{k+1}TM)$ be a multivector field of top
degree, i.e., $k=\dim(M)-1$. Then its graph
$$
L=\{(i_\alpha\pi,\alpha)\;|\; \alpha\in \wedge^kT^*M\}\subseteq
\TM^{(k)}
$$
is isotropic and involutive -- and, besides Poisson bivector fields,
these are the only examples of non-zero multivector fields whose
graphs have these properties, see \cite[Prop.~3.4]{Zambon}. Also,
since $\pr_2(L)=\wedge^k T^*M$, it is clear that $\pr_2(L)^\circ =
L^\perp\cap TM =\{0\}$, so $L$ is a $k$-Poisson structure. The
foliations defined by these $k$-Poisson structures are usually
singular: leaves are either open subsets of $M$ or singular points
(where $\pi$ vanishes). The restriction of $\pi$ to each open leaf
is nondegenerate, and the induced $(k+1)$-forms $\omega$ on these
leaves (see \eqref{eq:leafform}) are the volume forms dual to $\pi$,
i.e., they are defined by $i_{(i_\alpha \pi)} \omega = \alpha,
\forall \alpha \in \wedge^kT_x^*M$. The groupoids integrating these
$k$-Poisson structures have been mostly studied when $\dim(M)=2$ (so
$\pi$ is a bivector field), see \cite{GL,Mart}.

\end{example}

The fact that the particular $k$-Poisson structures of
Examples~\ref{ex:multisymp} and \ref{ex:multivector} are
infinitesimal versions of $k$-plectic groupoids was observed in
\cite[Prop.~3.7]{Zambon}.

In the preceding examples, the bundle $L$ always satisfied
$L=L^\perp$. For examples where this condition fails, consider
subbundles
\begin{equation}\label{eq:Lk}
L\subseteq \wedge^kT^*M \subset \TM^{(k)}.
\end{equation}
These are automatically isotropic and involutive. Note that
$$
L^\perp
= L^\circ \oplus \wedge^k T^*M, \;\; \mbox{ and }\;\;
L^\perp \cap TM = L^\circ.
$$
So $L$ is a $k$-Poisson structure as long as $L^\circ=\{0\}$, and $L
\subsetneq L^\perp$ as long as $L$ is properly contained in
$\wedge^kT^*M$.  A $k$-plectic groupoid integrating it is $L$
itself, viewed as a vector bundle (with groupoid structure given by
fibrewise addition), equipped with the $k$-plectic form given by the
pullback of the canonical multisymplectic form on $\wedge^kT^*M$
(see \eqref{eq:can}); the fact that this pullback is nondegenerate
boils down to the condition $L^\perp \cap TM=L^\circ = \{0\}$.

\begin{example}\label{ex:T*k}
For $L=\wedge^k T^*M$, note that $L^\circ=\{0\}$ (and hence $L$ is a
$k$-Poisson structure on $M$) if and only if $\dim(M)\geq k$.
\end{example}

\begin{example}
Let $\xi$ be a nondegenerate $k$-form on $M$, and let
$L\subset \wedge^kT^*M$ be the line bundle generated by $\xi$,
$$
L|_x=\{c\xi_x \; | \; c\in \mathbb{R}\},\;\;\; x\in M.
$$
Then $L^\circ = \ker(\xi)=0$, so $L$
is a $k$-Poisson structure.

\end{example}

\begin{remark}\label{rem:fol2}
Note that all $k$-Poisson structures of the type \eqref{eq:Lk}
determine the same foliation, the leaves of which are the points of
$M$.
\end{remark}

A general observation is that one can take direct products of
$k$-Poisson structures: if $L_1$ and $L_2$ are $k$-Poisson
structures on $M_1$ and $M_2$, respectively, we define their product
by
$$
L := \{ (X+ Y, \alpha + \beta)\;|\; (X,\alpha)\in L_1,\,
(Y,\beta)\in L_2 \} \subseteq TM\oplus \wedge^kT^*M,
$$
where $M=M_1\times M_2$ and we simplify the notation by identifying
forms on $M_i$ with their pullbacks to $M$ via the projections. One
may directly verify that $L$ is a $k$-Poisson structure on $M$.
Moreover, if $(\grd_i\toto M_i,\omega_i)$ is a $k$-symplectic
groupoid integrating $L_i$, $i=1,2$, the direct product
$\grd_1\times \grd_2\toto M_1\times M_2$ (equipped with the
$k$-plectic form $\omega_1 + \omega_2$) is a $k$-plectic groupoid
that integrates $L$. The following is a concrete example.

\begin{example}\label{ex:prod}
Let $(M,\omega)$ be a $k$-plectic manifold, and let $N$ be a
manifold with $\dim(N)\geq k$. Then the subbundle
$$
L=\{(X, i_X\omega +\alpha)\;|\; X\in TM, \alpha\in \wedge^k T^*N\}
\subset T(M\times N)\oplus \wedge^k T^*(M\times N)
$$
is a $k$-Poisson structure on $M\times N$ (c.f.
\cite[Thm.~3.12]{Zambon}), the direct product of the $k$-plectic
form on $M$ with the $k$-Poisson structure $L=\wedge^kT^*N$ on $N$
(see Example~\ref{ex:T*k}). The leaves of $L$ are $M\times \{t\}$,
$t\in N$, with induced $(k+1)$-form (as in \eqref{eq:leafform})
given by $\omega$.
\end{example}

The next observation illustrates that $k$-Poisson structures become
more rigid than Poisson structures when $k>1$.

\begin{remark} Let $M$ and $N$ be as is Example~\ref{ex:prod}, let $f\in
C^\infty(N)$, and consider the smooth family $\omega_t =
f(t)\omega$, $t\in N$, of $k$-plectic forms on $M$. For $k=1$, this
family defines a Poisson structure on $M\times N$, uniquely
determined by the fact that its symplectic leaves are $(M\times
\{t\},\omega_t)$. A higher generalization of this Poisson structure
is given by the (isotropic) subbundle $L\subset T(M\times N)\oplus
\wedge^k T^*(M\times N)$ defined by
$$
L|_{(x,t)}=\{(X, i_X\omega_t +\alpha)\;|\; X\in T_xM, \alpha\in
\wedge^k T_t^*N\}.
$$
As it turns out, for $k>1$, one may verify that such $L$ is
involutive if and only if $df=0$, i.e., $f$ is (locally) constant.
\end{remark}


We finally mention another product-type operation for
multisymplectic manifolds leading to higher Poisson structures that
are not multisymplectic.

\begin{example}
Let $(M_i,\omega_i)$ be a  $k_i$-plectic manifold, $i=1,2$. Let
$M=M_1\times M_2$ and $\omega = \omega_1 \wedge \omega_2 \in
\Omega^{k_1+k_2 + 2}(M)$ (we keep the simplified notation of
identifying forms on $M_i$ with their pullbacks to $M$ via the
projections $M\to M_i$). Then
$$
L =\{(X,i_X\omega)=(X,(i_X\omega_1)\wedge \omega_2) \;|\; X\in
TM_1\} \subset TM\oplus \wedge^{k_1+k_2+1}T^*M
$$
can be checked to be a $(k_1+k_2+1)$-Poisson structure. Its leaves
are of the form $M_1\times \{y\}$, for $y\in M_2$, and the induced
$(k_1+k_2+2)$-form on each leaf is zero. An integrating $k$-plectic
groupoid is given by the direct product of the pair groupoid $M_1
\times M_1$ (see Example \ref{ex:multisymp}) and the trivial
groupoid over $M_2$, endowed with the multiplicative $(k_1+k_2 +
2)$-form given by $(p_1^*\omega_1 - p_2^*\omega_1)\wedge \omega_2$.

\end{example}


\end{document}